\newtheorem{theorem}{Theorem}[section]
\theoremstyle{definition}
\newtheorem{definition}[theorem]{Definition}
\theoremstyle{remark}
\newtheorem{remark}[theorem]{Remark}
\newtheoremstyle{head}% name
{}% Space above
{}% Space below
{\bfseries}% Body font
{}% Indent amount (empty = no indent, \parindent = para indent)
{}% Thm head font
{}% Punctuation after thm head
{.5em}% Space after thm head: " " = normal interword space;
\theoremstyle{head}
\newtheoremstyle{citing}% name
  {3pt}%      Space above, empty = `usual value'
  {3pt}%      Space below
  {\itshape}% Body font
  {}%         Indent amount (empty = no indent, \parindent = para indent)
  {\bfseries}% Thm head font
  {:}%        Punctuation after thm head
  {.5em}%     Space after thm head: " " = normal interword space;
\theoremstyle{citing}
\begin{document}
\title[]{An inverted pendulum with a moving pivot point:\\ examples of topological approach}
\author{Ivan Polekhin}
\address{}
\email{ivanpolekhin@gmail.com}
\urladdr{}
\keywords{inverted pendulum, Lefschetz-Hopf theorem, Wa\.{z}ewski principle, periodic solution}
\date{July 17, 2014}
\begin{abstract}
Two examples concerning an application of topology in the study of the dynamics of an inverted plain mathematical pendulum with a pivot point moving along a horizontal straight line are considered. 
The first example is an application of the Wa{\.z}ewski principle to the problem of the existence of a solution without falling in the case of a arbitrary prescribed law of motion of the pivot point. The second example is a proof of the existence of periodic solution in the same system when the law of motion is periodic as well. Moreover, in the second case it is also shown that along the obtained periodic solution the pendulum never becomes horizontal (falls). The proof is an example of application of the recent developments in the fixed point theory based on the Lefschetz-Hopf theory. 
\end{abstract}
\maketitle
% 
%********************************************************************************
%
%
%********************************************************************************
%********************************************************************************
%
\section{Introduction}
\label{section-introduction}
%
%********************************************************************************
%********************************************************************************
%
Both considered in the paper examples of the topological approach to the system of an inverted pendulum with a moving pivot point stem from a well-known problem originally presented in a famous \textit{What is mathematics?} book by Courant and Robbins \cite{CR} and known to be formulated by H. Whitney. The problem is as follows.
\par
\textit{Suppose a train travels from station $A$ to station $B$ along a straight 
section of track. The journey need not be of uniform speed or acceleration.
The train may act in any manner, speeding up, slowing down, 
coming to a halt, or even backing up for a while, before reaching $B$. 
But the exact motion of the train is supposed to be known in advance; 
that is, the function $s = f(t)$ is given, where $s$ is the distance of the train 
from station $A$, and $t$ is the time, measured from the instant of departure. 
On the floor of one of the cars a rod is pivoted so that it may move 
without friction either forward or backward until it touches the floor. If it 
does touch the floor, we assume that it remains on the floor henceforth; 
this will be the case if the rod does not bounce. Is it possible to place 
the rod in such a position that, if it is released at the instant when the 
train starts and allowed to move solely under the influence of gravity 
and the motion of the train, it will not fall to the floor during the entire 
journey from $A$ to $B$?}
\par
The positive answer is given by the authors as well as its correct explanation, which, however, is based on the following assumption:
\textit{the motion of the rod depends continuously on its initial condition.} This assumption seems natural
for a wide range of mechanical systems, yet it should be rigorously justified in the particular case since we assume that once the rod touches the floor, it remains on it henceforth, therefore, the continuity becomes less obvious. This shortcoming of the original proof was mentioned and briefly commented by Arnold in his book \cite{ARN} which also includes short overview of the articles related to the matter. Yet detailed consideration of the issue of whether original prove is full and correct or not is beyond the objective of the paper.
\par
In the first section, application of the topological Wa{\.z}ewski principle to the above problem is considered. This method allows us to prove existence of solutions without falling (even on an infinite time interval), including solutions with zero initial velocity of inverted pendulum. In the second section, we consider an application of recent developments in fixed point theory, based both on the Lefschetz-Hopf theorem and Wa\.{z}ewski's method, to the same mechanical system, and not only prove existence of solution without falling, but show that there always exists periodic solution without falling when it is assumed that the law of pivot point motion is periodic.  
%
%********************************************************************************
%********************************************************************************
%
\section{First example: Solution without falling}
\label{first-section}
%
%********************************************************************************
%********************************************************************************
In this section, we consider the system consisting of an inverted pendulum with moving pivot point and qualitatively study its dynamics. In particular, we prove existence of solutions without falling in case of arbitrary smooth law of motion of the pivot point. Though the proof is self-contained, if it is needed, one can find more detailed presentation of the Wa\.{z}ewski principle and related topics in \cite{WA}, \cite{HA}, and \cite{RSC}.
\par
Let $l$ be the distance between the pivot point and the mass point located at the end of the inverted pendulum (i.e. its length), the rod of the pendulum is massless and the mass point weighs $m$, the gravitational constant is denoted by $g$, and the law of motion of the pivot point along a straight horizontal line is given by a smooth function of time $f \colon [0, \infty) \to \mathbb{R}$. Therefore, the pendulum is moving in accordance with the gravity action and its dynamics also depends on the law of motion of the pivot point.
\par
Let $Oxy$ be a fixed Cartesian coordinate system, such that the pivot point moves along the $x$-axis and the $y$-axis is vertical and oriented in an opposite way to the gravitational force. Let $\varphi$ denote the angle between the $x$-axis and the rod ($\varphi = -\pi/2$ and $\varphi = \pi/2$ are horizontal positions for the pendulum), i.e. for $x$ and $y$ coordinates of the mass point we have
\begin{equation*}
\begin{aligned}
& x = f + l \sin\varphi,\\
& y = l \cos \varphi.
\end{aligned}
\end{equation*}
One can easily obtain the kinetic energy $T$ of the system
\begin{equation*}
T = \frac{m}{2}\left( \dot x^2 + \dot y^2 \right) = \frac{m}{2}\left( \dot f^2 + 2 \dot f l \dot \varphi \cos\varphi + l^2 \dot\varphi^2 \right),
\end{equation*}
and its potential energy
\begin{equation*}
U = mgy = mgl\cos\varphi.
\end{equation*}
Therefore, the Lagrangian function is as follows
\begin{equation*}
L = T - U = \frac{m}{2}\left( \dot f^2 + 2 \dot f l \dot \varphi \cos\varphi + l^2 \dot\varphi^2 \right) - mgl\cos\varphi.
\end{equation*}
Finally, we obtain the following system 
\begin{equation}
\label{main-eq}
\begin{aligned}
& \dot\varphi = p,\\
& \dot p = \frac{g}{l}\sin\varphi - \frac{\ddot f}{l}\cos\varphi.
\end{aligned}
\end{equation}
Here we assume that $\varphi$ variable is $2\pi$-periodic, i.e. we allow the pendulum to be under the horizontal floor, if the terminology of the original statement from \cite{CR} to be used.
\par
Let us now prove the following
\begin{theorem}
For the system (\ref{main-eq}) there exists $\varphi_0 \in (-\pi/2,\pi/2)$ such that the solution starting from $\varphi_0$ with $p_0 = 0$ at time $t = 0$ satisfies the following condition
\begin{equation*}
\label{main-ineq}
-\pi/2<\varphi(t,\varphi_0, 0)<\pi/2\quad\mbox{for all}\quad t \in [0,\infty).
\end{equation*}
\end{theorem}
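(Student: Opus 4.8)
The plan is to apply the Wa\.{z}ewski retract principle in the extended phase space $[0,\infty)\times\mathbb{R}^2$ with coordinates $(t,\varphi,p)$, taking as the Wa\.{z}ewski set the closed strip
\[
W=\{(t,\varphi,p): t\ge 0,\ -\pi/2\le\varphi\le\pi/2,\ p\in\mathbb{R}\},
\]
and as the distinguished set of initial conditions the arc
\[
S=\{(0,\varphi_0,0): \varphi_0\in[-\pi/2,\pi/2]\}\subset\{t=0\}.
\]
Because $f$ is smooth, $\ddot f$ is bounded on every compact time interval, so inside $W$ the right-hand side of (\ref{main-eq}) grows at most linearly in $p$; hence no solution escapes to $p=\pm\infty$ in finite time, and a trajectory can leave $W$ only through the lateral boundary $\{\varphi=\pm\pi/2\}$.

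The decisive computation is the behaviour of the field on this lateral boundary, which I would carry out first. Since $\cos(\pm\pi/2)=0$, the term containing $\ddot f$ drops out of the second equation and one is left with $\dot p=\pm g/l$ on $\varphi=\pm\pi/2$. On $\varphi=\pi/2$ the region lies in $\varphi<\pi/2$, so a solution leaves when $\dot\varphi=p>0$; when $p=0$ the first derivative vanishes but $\ddot\varphi=\dot p=g/l>0$ pushes $\varphi$ upward, so the solution still leaves immediately. Thus the egress part of $\{\varphi=\pi/2\}$ is exactly $\{p\ge 0\}$, and by the mirror-image argument the egress part of $\{\varphi=-\pi/2\}$ is $\{p\le 0\}$. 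In both cases the egress is strict (the trajectory does not return), so the exit set $W^-$ consists entirely of strict egress points and the Wa\.{z}ewski theory applies; in particular the exit-point map is continuous on the set of initial data whose trajectories eventually leave $W$.

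Next I would run the retract argument. The two endpoints $(0,\pm\pi/2,0)$ of $S$ lie in $W^-$ (they are the $p=0$ egress points just analysed) and leave $W$ instantly, while every interior point of $S$ starts in the interior of $W$. Suppose, for contradiction, that the trajectory of every point of $S$ eventually leaves $W$. Then the exit-point map $\sigma\colon S\to W^-$ is well defined and continuous, and it fixes the two endpoints of $S$, so it is a retraction of the arc $S$ onto its two-point subset $\{(0,-\pi/2,0),(0,\pi/2,0)\}$. This is impossible, since $S$ is connected and a continuous image of a connected set cannot be a two-point discrete set. Hence there exists $\varphi_0$ whose corresponding solution stays in $W$ for all $t\ge 0$.

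It remains to improve this to the strict inequality and to locate $\varphi_0$ in the open interval. A solution confined to the closed strip can never reach $\varphi=\pi/2$: arriving there from inside forces $\dot\varphi=p\ge 0$ at the contact time, and then either $p>0$ or the second-order term $g/l>0$ drives $\varphi$ above $\pi/2$, so the solution would leave $W$, a contradiction; the case $\varphi=-\pi/2$ is symmetric. Therefore the confined solution satisfies $-\pi/2<\varphi(t)<\pi/2$ for all $t\ge 0$, and in particular $\varphi_0\in(-\pi/2,\pi/2)$, which is the assertion. I expect the main obstacle to be the verification of strict egress at the tangency points $p=0$, where the flow is tangent to the boundary; it is precisely the vanishing of $\cos\varphi$ at the horizontal positions that removes the influence of the pivot acceleration $\ddot f$ and leaves the gravitational term $\pm g/l$ with the correct sign to guarantee egress.
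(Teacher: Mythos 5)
Your proposal is correct and follows essentially the same route as the paper: the same Wa\.{z}ewski strip $\{-\pi/2\le\varphi\le\pi/2\}$ in the extended phase space, the same key computation $\ddot\varphi=\pm g/l$ at the tangency points $p=0$ on $\varphi=\pm\pi/2$ (where $\cos\varphi$ kills the $\ddot f$ term) to establish strict egress and hence continuity of the exit map, and the same connectedness contradiction on the arc of initial conditions with $p_0=0$. The one imprecise step is calling $\sigma$ itself a retraction of $S$ onto its two endpoints --- its image is a subset of the full exit set $W^-$, not the two-point set --- so you should either compose $\sigma$ with the evident retraction of $W^-$ onto those two points (the paper's map $\pi$) or observe directly that $W^-$ has two components, each containing one endpoint of $S$, while $\sigma(S)$ must be connected.
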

\begin{proof}
Let $\Omega$ be the following subset of the extended phase space
\begin{equation*}
\Omega = \{ (t, \varphi, p) \in [0, \infty) \times \mathbb{R}/2\pi\mathbb{Z} \times \mathbb{R} \colon -\pi/2 \leqslant \varphi \leqslant \pi/2 \}.
\end{equation*}
Consider a line segment $L$ contained in $\Omega$ and defined in coordinates as follows
\begin{equation*}
L = \{ (t, \varphi, p) \in [0, \infty) \times \mathbb{R}/2\pi\mathbb{Z} \times \mathbb{R} \colon t = 0, -\pi/2 \leqslant \varphi \leqslant \pi/2, p = 0 \}.
\end{equation*}
We now show that $L$ contains at least one point such that the solution starting from it remains in the subset $-\pi/2<\varphi<\pi/2$ for all $t\geqslant 0$. Assume contrary. Then the following map, that we denote $\sigma$, from $L$ to $\partial\Omega$ is correctly defined
\begin{equation*}
\sigma \colon (0, \varphi_0, p_0) \in L \mapsto (t^*,\varphi(t^*,\varphi_0,p_0),p(t^*,\varphi_0,p_0)) \in \partial\Omega.
\end{equation*}
Where $t^* = \sup(T)$, $T = \{ s \in [0,\infty) \colon \varphi(t, \varphi_0, p_0) \in [-\pi/2,\pi/2] \,\mbox{for all}\,t\in[0,s] \}$.
\par
Let us now prove that $\sigma$ is continuous. In accordance with the Wa\.{z}ewski method, it can be shown by consideration of the system (\ref{main-eq}) in the vicinity of $\partial\Omega$. It is sufficient to show that any solution starting from $L$ is either transverse to the boundary $\partial\Omega$ at time $t>0$, or locally does not belong to $\Omega\setminus\partial\Omega$. Then the continuity follows from the continuous dependence from the initial conditions for \ref{main-eq}. For more details, see \cite{RSC}.
\par
\begin{figure}[h!]
\centering
\def\svgwidth{240 pt}
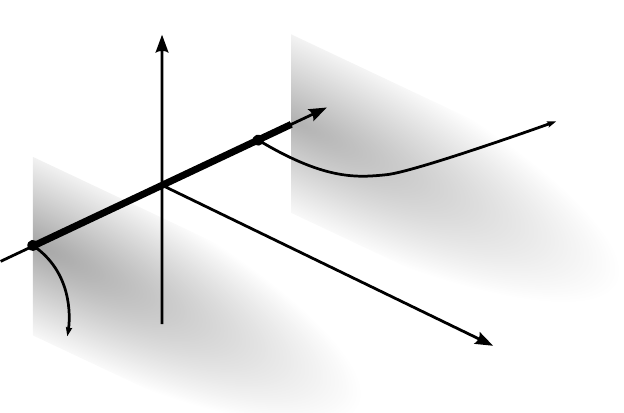
\caption{Each solution starting from $L$ and approaching half-planes $\varphi = \pm\pi/2$ is either a) transverse to the boundary $\partial\Omega$ at time $t>0$, or b) locally does not belong to $\Omega\setminus\partial\Omega$.}
\end{figure}
The latter sufficient condition for the continuity is satisfied for \ref{main-eq}. Indeed, for solutions intersecting $\Omega^+$ and $\Omega^-$, where
\begin{equation*}
\begin{aligned}
\Omega^- = &\{ (t, \varphi, p) \in [0, \infty) \times \mathbb{R}/2\pi\mathbb{Z} \times \mathbb{R} \colon t > 0, \varphi = \pi/2, p > 0 \}\cup\\
&\{(t, \varphi, p) \in [0, \infty) \times \mathbb{R}/2\pi\mathbb{Z} \times \mathbb{R} \colon t > 0, \varphi = -\pi/2, p < 0\},\\
\Omega^+ = &\{ (t, \varphi, p) \in [0, \infty) \times \mathbb{R}/2\pi\mathbb{Z} \times \mathbb{R} \colon t > 0, \varphi = \pi/2, p < 0 \}\cup\\
&\{(t, \varphi, p) \in [0, \infty) \times \mathbb{R}/2\pi\mathbb{Z} \times \mathbb{R} \colon t > 0, \varphi = -\pi/2, p > 0\},
\end{aligned}
\end{equation*}
we have transversality property satisfied. Moreover, solutions starting from $L$ cannot leave $\Omega$ through $\Omega^+$. Now prove that  at time $t>0$ they can only leave $\Omega$ through $\Omega^-$.
\par
For any solution starting from or intersecting $\partial\Omega$ at a point from the following set
\begin{equation*}
\begin{aligned}
\Omega^+_0 = &\{ (t, \varphi, p) \in [0, \infty) \times \mathbb{R}/2\pi\mathbb{Z} \times \mathbb{R} \colon t \geqslant 0, \varphi = \pi/2, p = 0 \}\cup\\
&\{(t, \varphi, p) \in [0, \infty) \times \mathbb{R}/2\pi\mathbb{Z} \times \mathbb{R} \colon t \geqslant 0, \varphi = -\pi/2, p = 0\},
\end{aligned}
\end{equation*}
From (\ref{main-eq}), we have
\begin{equation*}
\ddot \varphi = 
\begin{cases}
g/l &\mbox{if } \varphi=\pi/2\\
-g/l &\mbox{if } \varphi=-\pi/2.
\end{cases}
\end{equation*}
Therefore, we obtain that the solutions starting from $L$ cannot reach $\Omega^+_0$ at time $t>0$ and for $t=0$ they at least locally leave $\Omega$.
\par
Finally, we complete the proof by the following observation, typical to the Wa\.{z}ewski method. Consider the set $\omega$ of the boundary points which satisfies $\varphi=\pm\pi/2$, i.e. 
\begin{equation*}
\omega = \partial\Omega\setminus\{ (t, \varphi, p) \in [0, \infty) \times \mathbb{R}/2\pi\mathbb{Z} \times \mathbb{R} \colon \varphi\in(-\pi/2,\pi/2) \},
\end{equation*}
and \textit{the} map $\pi \colon \omega \to \omega \cap L$ that is continuous. Since $\sigma$ and $\pi$ are continuous, then we have a continuous map $\pi \circ \sigma$ from $L$ to its two-point boundary. This contradiction proves the theorem.
\end{proof}
\begin{remark}
In the proof, it was implicitly assumed that all solutions exist on $[0,\infty)$, yet for the considered system it can be rigorously proved.
\end{remark}
\begin{remark}
In the same way, one can prove the existence of solution without falling in the system of inverted spherical pendulum with pivot point moving on a horizontal plane. 
\end{remark}
%
%********************************************************************************
%********************************************************************************
%
\section{Second example: Periodic solution}
\label{second-section}
%
%********************************************************************************
%********************************************************************************
%
In this section we show that in the case of a periodic law of motion of the pivot point, there always exists a periodic solution without falling. This result is a straightforward application of some recent developments in the fixed point theory by Srzednicki, W\'{o}jcik and Zgliczynski \cite{SWZ} which we present here omitting details. First, following \cite{SWZ} we introduce some definitions which we slightly modify for our use.
\par
From now on, we assume that $v \colon \mathbb{R}\times M \to TM$ is a smooth time-dependent vector-field on a manifold $M$. 
\begin{definition}
For $t_0 \in \mathbb{R}$ and $x_0 \in M$, the map $t \mapsto x(t,t_0,x_0)$ is the solution for the initial value problem for the system $\dot x = v(t, x)$, such that $x(0,t_0,x_0)=x_0$.
\end{definition}
\begin{definition}
Let $W \subset \mathbb{R} \times M$. Define the {exit set} $W^-$ as follows. A point $(t,x)$ is in $W^-$ if there exists $\delta>0$ such that $(t+t_0, x(t,t_0,x_0)) \notin W$ for all $t \in (0,\delta)$.
\end{definition}
\begin{definition}
We call $W \subset \mathbb{R}\times M$ a {Wa\.{z}ewski block} for the system $\dot x = v(t,x)$ if $W$ and $W^-$ are compact.
\end{definition}
 Now introduce some notations. By $\pi_1$ and $\pi_2$ we denote the projections of $\mathbb{R}\times M$ onto $\mathbb{R}$ and $M$ respectively. If $Z \subset \mathbb{R}\times M$, $t\in\mathbb{R}$, then we denote
\begin{equation*}
Z_t=\{z \in M \colon (t,z) \in Z\}.
\end{equation*}
\begin{definition}
A set $W \subset [a,b] \times M$ is called a segment over $[a,b]$ if it is a block with respect to the system $\dot x = v(t,x)$ and the following conditions hold:
\begin{itemize}
\item there exists a compact subset $W^{--}$ of $W^-$ called the essential exit set such that
\begin{equation*}
W^-=W^{--}\cup(\{b\}\times W_b),\quad W^-\cap([a,b)\times M) \subset W^{--},
\end{equation*}
\item there exists a homeomorphism $h\colon [a,b]\times W_a \to W$ such that $\pi_1 \circ h = \pi_1$ and
\begin{equation}
\label{cond-2}
h([a,b]\times W_a^{--})=W^{--}.
\end{equation}
\end{itemize}
\end{definition}
\begin{definition}
Let $W$ be a segment over $[a,b]$. It is called periodic if
\begin{equation*}
(W_a,W_a^{--})=(W_b,W_b^{--}).
\end{equation*}
\end{definition}
\begin{definition}
For periodic segment $W$, we define the corresponding monodromy map $m$ as follows
\begin{equation*}
m\colon W_a\to W_a, \quad m(x) = \pi_2 h(b,\pi_2 h^{-1}(a,x)).
\end{equation*}
\end{definition}
\begin{remark}
The monodromy map $m$ is a homeomorphism. Moreover, it can be proved that a different choice of $h$ satisfying (\ref{cond-2}) leads to the monodromy map homotopic to $m$. It follows that the isomorphism in homologies
\begin{equation*}
\mu_W = H(m) \colon H(W_a,W_a^{--}) \to H(W_a, W_a^{--})
\end{equation*}
is an invariant of $W$.
\end{remark}
\begin{theorem} 
\label{main-th}
\cite{SWZ} Let W be a periodic segment over $[a,b]$. Then the set
\begin{equation*}
U = \{ x_0 \in W_a \colon x(t-a,a,x_0) \in W_t\setminus W_t^{--}\,\mbox{for all}\,\, t \in [a,b] \}
\end{equation*}
is open in $W_a$ and the set of fixed points of the restriction $x(b-a,a,\cdot)|_U \colon U \to W_a$ is compact. Moreover, if $W$ and $W^{--}$ are ANRs then
\begin{equation*}
\mathrm{ind}(x(b-a,a,\cdot)|_U) = \Lambda(m) - \Lambda(m|_{W_a^{--}}).
\end{equation*}
Where by $\Lambda(m)$ and $\Lambda(m|_{W_a^{--}})$ we denote the Lefschetz number of $m$ and $m|_{W_a^{--}}$ respectively. In particular, if $\Lambda(m) - \Lambda(m|_{W_a^{--}}) \ne 0$ then $x(b-a,a,\cdot)|_U$ has a fixed point in $W_a$.
\end{theorem}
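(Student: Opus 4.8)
The plan is to combine the Wa\.{z}ewski escape analysis of trajectories leaving the block with the Lefschetz--Hopf fixed point index on the ANR pair $(W_a, W_a^{--})$. Throughout, write $P = x(b-a,a,\cdot)$ for the Poincar\'e map; by periodicity of the segment it carries each surviving point of $W_a$ back into $W_b = W_a$.

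First I would dispose of the two elementary assertions. For openness of $U$: if $x_0 \in U$ the compact arc $t \mapsto x(t-a,a,x_0)$, $t\in[a,b]$, lies in $W \setminus W^{--}$, and by the segment condition $W^- \cap ([a,b)\times M) \subset W^{--}$ a trajectory cannot leave $W$ before time $b$ except through $W^{--}$; since $W^{--}$ is compact, continuous dependence on initial data shows that all sufficiently close trajectories also survive, so $U$ is open. For compactness of the fixed-point set: it is closed in $W_a$, and if a sequence of fixed points converged to some $x_*\in W_a\setminus U$, then the trajectory of $x_*$ would meet the essential exit set; but the defining exit property of a block forces any trajectory touching $W^{--}$ to leave $W$ for small positive time, so by continuity the nearby fixed-point trajectories would leave $W$ as well, a contradiction. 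Hence the fixed points of $P|_U$ stay away from $\partial U$ and form a compact set, and $\mathrm{ind}(P|_U)$ is well defined.

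For the index identity I would pass to the product coordinates given by $h\colon [a,b]\times W_a \to W$, under which $W^{--}$ becomes $[a,b]\times W_a^{--}$, the spaces $W_a$ and $W_a^{--}$ inherit the ANR property from $W$ and $W^{--}$ (as $[a,b]$ is a compact ANR), and the monodromy map is the purely topological transition $m(x)=\pi_2 h(b,\pi_2 h^{-1}(a,x))$. The core of the argument is to construct a single self-map $\Psi\colon W_a \to W_a$ of the pair $(W_a,W_a^{--})$ that (i) agrees with $P$ on $U$, (ii) sends the escaping complement $W_a\setminus U$ into $W_a^{--}$ by following each trajectory to its exit point in $W^{--}$ and then transporting it to time $b$ through $h$, and (iii) is homotopic to $m$ through maps of the pair. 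Property (ii) relies on non-surviving trajectories exiting precisely through the essential exit set, which makes the exit map continuous by the block structure, and (iii) uses the product structure of $W$ to interpolate the flow-transition to the $h$-transition $m$ without ever creating fixed points on $W_a^{--}$.

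Granting such a $\Psi$, homotopy invariance of the fixed point index of pair maps gives $\mathrm{ind}(P|_U)=\mathrm{ind}(\Psi,\,W_a\setminus W_a^{--})$, and the Lefschetz--Hopf theorem for the compact ANR pair $(W_a,W_a^{--})$ evaluates this relative index as the relative Lefschetz number of the induced map. Additivity of the trace along the long exact homology sequence of the pair then splits it as $\Lambda(m)-\Lambda(m|_{W_a^{--}})$, which is the claimed formula; the final assertion is immediate, since a nonzero index forces $\mathrm{Fix}(P|_U)\neq\emptyset$. I expect the main obstacle to be exactly the construction of $\Psi$ and its admissible homotopy to $m$: one must show that the exit map off $U$ is continuous and that the interpolation remains fixed-point-free on $W_a^{--}$, which is the precise point where the Wa\.{z}ewski escape structure has to be reconciled with the topological product structure so that the relative fixed point index is simultaneously well defined and homotopy-invariant.
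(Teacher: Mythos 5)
The paper does not actually prove this statement: it is quoted verbatim from the reference \cite{SWZ} and used as a black box, so there is no in-paper argument to compare yours against. Measured instead against the known proof in that reference, your outline has the right architecture: the openness of $U$ and the compactness of $\mathrm{Fix}(x(b-a,a,\cdot)|_U)$ do follow from the compactness of $W$ and $W^{--}$ together with the inclusion $W^-\cap([a,b)\times M)\subset W^{--}$, exactly as you argue, and the index formula is indeed obtained by deforming the Poincar\'e map to the monodromy map $m$ through the product structure $h$ and then invoking the Lefschetz--Hopf theorem for compact ANRs, with the difference $\Lambda(m)-\Lambda(m|_{W_a^{--}})$ arising as the relative Lefschetz number of the pair $(W_a,W_a^{--})$ via the long exact sequence.

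That said, your proposal does not yet constitute a proof, because the entire load-bearing step is introduced with ``granting such a $\Psi$.'' The construction of $\Psi$ and of the admissible homotopy from $P|_U$ to $m$ is where all the analytic content lives: you must prove that the exit-time and exit-point maps are continuous on $W\setminus(\{b\}\times W_b)$ (this is the Wa\.{z}ewski retraction argument, and it uses that every point of $W^{--}$ genuinely leaves $W$ in forward time, i.e.\ that there are no internal tangencies hidden in $W^-\setminus W^{--}$ before time $b$), and you must verify that throughout the interpolation between the flow transition and the $h$-transition the fixed-point set stays compact and away from the frontier of $U$, since $\mathrm{ind}(\cdot|_U)$ is an index of a map on an open subset of an ANR and is only homotopy-invariant for homotopies admissible in that sense --- it is not enough to check that no fixed points appear on $W_a^{--}$ itself. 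A further small gap: you should justify that $W_a$ and $W_a^{--}$ are ANRs; the hypothesis is that $W$ and $W^{--}$ are, and the deduction goes through the homeomorphisms $W\cong[a,b]\times W_a$ and $W^{--}\cong[a,b]\times W_a^{--}$ (a retract argument), which you gesture at but in the wrong direction (you cannot simply say the slices ``inherit'' the property without using $h$). None of these points is fatal --- they are exactly the steps carried out in \cite{SWZ} --- but as written your text is a correct plan rather than a proof.
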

Let us now continue with the following direct application of the above theorem to the system (\ref{main-eq}).
\begin{theorem}
Suppose that in (\ref{main-eq}) function $f \colon \mathbb{R} \to \mathbb{R}$ is $T$-periodic, then there exists $\varphi_0$ and $p_0$ such that for all $t \in \mathbb{R}$
\begin{enumerate}
\item $\varphi(t,0,\varphi_0,p_0) = \varphi(t+T,0,\varphi_0,p_0)$ and $p(t,0,\varphi_0,p_0) = p(t+T,0,\varphi_0,p_0)$ ,
\item $\varphi(t,0,\varphi_0,p_0) \in (-\pi/2,\pi/2)$.
\end{enumerate}
\end{theorem}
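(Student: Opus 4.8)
The plan is to build a periodic Ważewski segment $W$ over $[0,T]$ for the now $T$-periodic system (\ref{main-eq}) on the cylinder $M=\mathbb{R}/2\pi\mathbb{Z}\times\mathbb{R}$, and then to apply Theorem \ref{main-th}. I would take
\[
W=[0,T]\times K,\qquad K=\{(\varphi,p):-\pi/2\leqslant\varphi\leqslant\pi/2,\ |p|\leqslant C\},
\]
where $C>0$ is a constant chosen large. Then $W_0=W_T=K$ is a topological disc, and the whole argument reduces to (i) verifying that $W$ is a periodic segment, and (ii) checking that the index $\Lambda(m)-\Lambda(m|_{W_0^{--}})$ is nonzero.

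For (i) I would read off the exit behaviour face by face, exactly as in the first example. On $\varphi=\pi/2$ one has $\dot p=g/l>0$ while $\dot\varphi=p$, so the exit part of this face is $\{p\geqslant0\}$ (the tangency point $p=0$ leaves because $\ddot\varphi=g/l>0$); symmetrically the exit part of $\varphi=-\pi/2$ is $\{p\leqslant0\}$. On $p=\pm C$ the exit condition is the sign of $\dot p=\frac{g}{l}\sin\varphi-\frac{\ddot f}{l}\cos\varphi$, which vanishes across $\varphi^*(t)=\arctan(\ddot f(t)/g)\in(-\pi/2,\pi/2)$: the exit part of the top face is $\{\varphi\geqslant\varphi^*(t)\}$ and of the bottom face $\{\varphi\leqslant\varphi^*(t)\}$. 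The delicate point is the tangency locus $\varphi=\varphi^*(t)$ on $p=\pm C$, where $\dot p=0$; here I would differentiate $\dot p$ along the flow and note that its leading term is $\frac{\sqrt{g^2+\ddot f^2}}{l}\,|p|$, which for $C$ large forces the trajectory to curve strictly out of $K$. Thus these tangency points are genuine exit points, the exit set is closed, $W$ and $W^-$ are compact, and the essential exit set $W^{--}$ consists at each time of the two disjoint arcs $A_t=\{\varphi=\pi/2,\ 0\leqslant p\leqslant C\}\cup\{p=C,\ \varphi^*(t)\leqslant\varphi\leqslant\pi/2\}$ and $B_t=\{\varphi=-\pi/2,\ -C\leqslant p\leqslant0\}\cup\{p=-C,\ -\pi/2\leqslant\varphi\leqslant\varphi^*(t)\}$. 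Since $\varphi^*(t)$ varies continuously and never reaches $\pm\pi/2$, the pair $(K,A_t\cup B_t)$ is topologically constant, so a fibre-preserving homeomorphism $h\colon[0,T]\times K\to W$ carrying $[0,T]\times(A_0\cup B_0)$ onto $W^{--}$ exists (slide the endpoint $\varphi^*$). Finally $T$-periodicity of $f$ gives $\ddot f(0)=\ddot f(T)$, hence $\varphi^*(0)=\varphi^*(T)$ and $(W_0,W_0^{--})=(W_T,W_T^{--})$, so $W$ is a periodic segment, and $K$ together with the arcs are ANRs.

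For (ii) the relevant pair is $(K,A_0\cup B_0)=(D^2,\ \text{two disjoint boundary arcs})$, whose relative homology is concentrated in degree one: from the long exact sequence $H_1(D^2,A\cup B)\cong\ker(\mathbb{Z}^2\to\mathbb{Z})\cong\mathbb{Z}$ and all other groups vanish. Since $m$ is a homeomorphism of the pair it acts on this $\mathbb{Z}$ by $\pm1$, so the relative Lefschetz number equals $-(\pm1)\neq0$; equivalently $\Lambda(m)=1$ (as $K$ is contractible) while $\Lambda(m|_{W_0^{--}})\in\{0,2\}$ according to whether $m$ swaps the two arcs, and in either case $\Lambda(m)-\Lambda(m|_{W_0^{--}})\neq0$. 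Theorem \ref{main-th} then produces a point $(\varphi_0,p_0)\in W_0$ fixed by the time-$T$ map $x(T,0,\cdot)$ whose trajectory stays in $W_t\setminus W_t^{--}$ for all $t\in[0,T]$; because the field is $T$-periodic in $t$, this fixed point yields a $T$-periodic solution, giving conclusion (1).

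It remains to upgrade ``stays in $W_t\setminus W_t^{--}$'' to the strict bound (2), which I would do by contradiction. The trajectory lies in $K$ for all $t$, so $\varphi(t)\leqslant\pi/2$; if $\varphi(t_1)=\pi/2$, then $t_1$ is a maximum of the (periodic, everywhere differentiable) function $\varphi$, forcing $p(t_1)=\dot\varphi(t_1)=0$, i.e. the trajectory sits at $(\pi/2,0)\in A_{t_1}\subset W_{t_1}^{--}$, contradicting membership in $W_{t_1}\setminus W_{t_1}^{--}$. The same argument at $\varphi=-\pi/2$, using $(-\pi/2,0)\in B_{t_1}$, excludes the lower boundary, so $\varphi(t)\in(-\pi/2,\pi/2)$ for all $t$, which is (2). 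The main obstacle in this scheme is step (i), and specifically the tangency analysis on $p=\pm C$ needed to keep $W^-$ closed and the essential exit set exactly two arcs; choosing $C$ large is precisely what makes those two faces behave well, while the homological computation and the closing extremum argument are then routine.
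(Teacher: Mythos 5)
Your proposal is correct and follows essentially the same route as the paper: the same block $W=[0,T]\times\{|\varphi|\leqslant\pi/2,\ |p|\leqslant C\}$, the same identification of the essential exit set as two arcs split by the tangency curve $\varphi^*(t)=\arctan(\ddot f(t)/g)$ (the paper's $\varphi'(t)$), the same second-derivative computation showing the tangencies on $p=\pm C$ are genuine exit points for $C>\sup|\dddot f|/g$, and the same Lefschetz/Euler-characteristic count $\chi(W_0)-\chi(W_0^{--})=1-2=-1$. Your explicit extremum argument upgrading ``stays in $W_t\setminus W_t^{--}$'' to the strict bound $\varphi\in(-\pi/2,\pi/2)$ is a worthwhile detail that the paper leaves implicit.
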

\begin{proof}
First, in order to apply \ref{main-th}, we show that a periodic Wa\.{z}ewski segment for our system can be defined as follows
\begin{equation*}
W = \{(t,\varphi,p) \in [0,T] \times \mathbb{R}/2\pi\mathbb{Z}\times\mathbb{R} \colon -\pi/2\leqslant\varphi\leqslant\pi/2, -p'\leqslant p \leqslant p'\},
\end{equation*}
where $p'$ satisfies
\begin{equation*}
\label{p-cond}
p' > \sup\limits_{t \in [0,T]} \frac{|\dddot\xi|}{g}.
\end{equation*}
It is clear that $W$ is compact. Let us show that $W^{--}$ is compact as well and
\begin{equation*}
\begin{aligned}
W^{--} = &\{ (t,\varphi,p) \in [0,T] \times \mathbb{R}/2\pi\mathbb{Z}\times\mathbb{R} \colon \varphi = \pi/2, 0 \leqslant p \leqslant p' \}\cup\\
&\{ (t,\varphi,p) \in [0,T] \times \mathbb{R}/2\pi\mathbb{Z}\times\mathbb{R} \colon \varphi = -\pi/2, -p' \leqslant p \leqslant 0 \}\cup\\
&\{ (t,\varphi,p) \in [0,T] \times \mathbb{R}/2\pi\mathbb{Z}\times\mathbb{R} \colon \varphi'(t) \leqslant \varphi \leqslant \pi/2, p = p' \}\cup\\
&\{ (t,\varphi,p) \in [0,T] \times \mathbb{R}/2\pi\mathbb{Z}\times\mathbb{R} \colon -\pi/2 \leqslant \varphi \leqslant \varphi'(t), p = -p' \},
\end{aligned}
\end{equation*}
where for a given $t \in [0,T]$, $\varphi'(t) \in (-\pi/2,\pi/2)$ satisfies
\begin{equation*}
g\sin\varphi'(t)-\ddot f (t)\cos\varphi'(t)=0.
\end{equation*}
Indeed, if $p=p'$ and $\varphi = \varphi'(t)$ (i.e. $\dot p = 0$) then we have from (\ref{main-eq})
\begin{equation*}
\begin{aligned}
\ddot p = &\frac{g}{l}p'\cos\varphi'(t) - \frac{\dddot f}{l}\cos\varphi'(t) + \frac{\ddot f}{l}p'\sin\varphi'(t)=\\
&\frac{g}{l}p'\cos\varphi'(t) - \frac{\dddot f}{l}\cos\varphi'(t) + \frac{\ddot f^2}{gl}p'\cos\varphi'(t)>0.
\end{aligned}
\end{equation*}
Since $\varphi'(t)$ is the only root for $\dot p = 0$ in $[-\pi/2,\pi/2]$, then for $p = p'$ and $\varphi'(t) < \varphi \leqslant \pi/2$ we obtain $\dot p > 0$, for $p = p'$ and $-\pi/2 \leqslant \varphi < \varphi'(t)$ we have $\dot p < 0$.
Similarly to the previous case, if $p=-p'$ and $\varphi = \varphi'(t)$ then
\begin{equation*}
\begin{aligned}
\ddot p = &-\frac{g}{l}p'\cos\varphi'(t) - \frac{\dddot f}{l}\cos\varphi'(t) - \frac{\ddot f}{l}p'\sin\varphi'(t)=\\
&-\frac{g}{l}p'\cos\varphi'(t) - \frac{\dddot f}{l}\cos\varphi'(t) - \frac{\ddot f^2}{gl}p'\cos\varphi'(t)<0,
\end{aligned}
\end{equation*}
and $\dot p > 0$ when $p=-p'$, $\varphi'(t)<\varphi\leqslant\pi/2$; $\dot p < 0$ when $p=p'$, $-\pi/2\leqslant\varphi<\varphi'(t)$.
\par
For the rest part of $W^{--}$ we have already shown in the previous section that $\ddot \varphi > 0$ if $\varphi = \pi/2$, $p = \dot \varphi = 0$, and $\ddot \varphi < 0$ if $\varphi = -\pi/2$, $p = \dot \varphi = 0$.
\par
\begin{figure}[h!]
\centering
\def\svgwidth{320 pt}
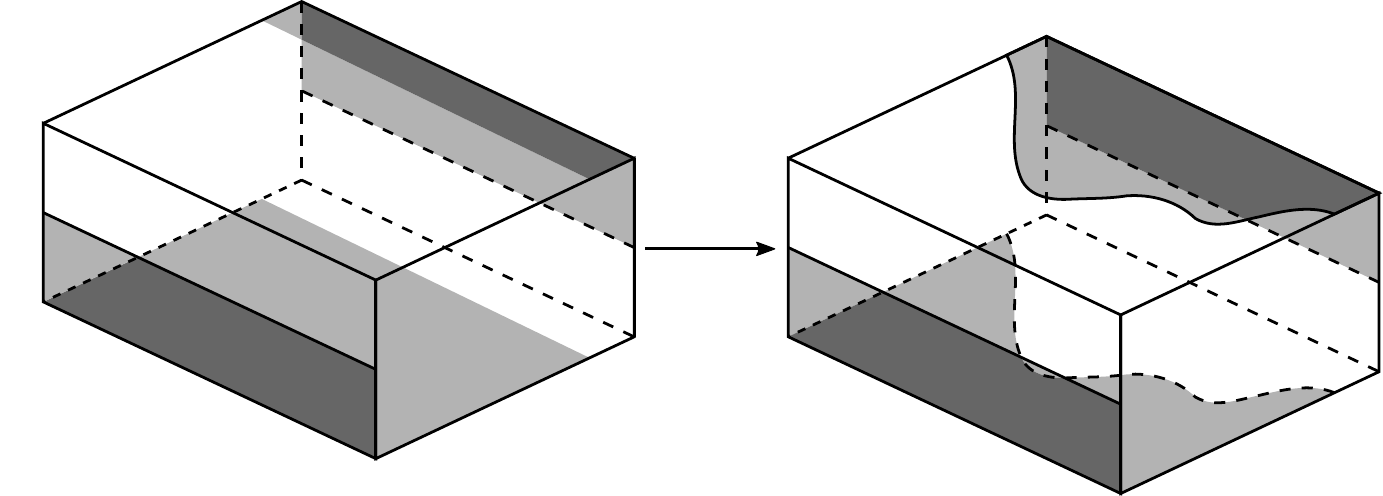
\caption{Periodic segment $W$. $W^{--}$ is in gray.}
\end{figure}
Omitting explicit definition of a homeomorphism $h$, we note that $m = \mathrm{id}$. Therefore,
\begin{equation*}
\Lambda(\mathrm{id}|_{W_0}) - \Lambda(\mathrm{id}|_{W_0^{--}}) = \chi(W_0) - \chi(W_0^{--}) = -1,
\end{equation*}
and theorem \ref{main-th} can be applied.
\end{proof}
%
%
%
%********************************************************************************
%********************************************************************************
%********************************************************************************
%********************************************************************************
%

\end{document}